\documentclass[12pt,english]{article}
\usepackage[T1]{fontenc}
\usepackage[latin9]{inputenc}
\usepackage{amsmath}
\usepackage{amssymb}

\makeatletter
\usepackage[T1]{fontenc}
\usepackage[latin9]{inputenc}
\usepackage{color}
\usepackage{amsmath}
\usepackage{graphicx}
\usepackage{amsfonts}
\usepackage{a4wide}
\usepackage{babel}%

\usepackage{mathrsfs}
\usepackage[active]{srcltx}

\setcounter{MaxMatrixCols}{30}

\providecommand{\U}[1]{\protect\rule{.1in}{.1in}}

\newtheorem{theorem}{Theorem}

\newtheorem{corollary}[theorem]{Corollary}

\newtheorem{lemma}[theorem]{Lemma}

\newtheorem{proposition}[theorem]{Proposition}
\newtheorem{remark}[theorem]{Remark}

\newenvironment{proof}[1][Proof]{\noindent\textbf{#1.} }{\ \rule{0.5em}{0.5em}}

\usepackage[usenames,dvipsnames]{pstricks}
\usepackage{epsfig}
\usepackage{pst-grad} 
\usepackage{pst-plot} 

\usepackage{graphicx}

\makeatother

\usepackage{babel}
\begin{document}

\title{A priori estimates for the Fitzpatrick function }

\author{M.D. Voisei}

\date{{}}
\maketitle
\begin{abstract}
New perspectives, proofs, and some extensions of known results are
presented concerning the behavior of the Fitzpatrick function of a
monotone type operator in the general context of a locally convex
space. 
\end{abstract}

\section{Introduction and preliminaries}

Given $T:X\rightrightarrows X^{*}$ a multi-valued operator defined
in a locally convex space $X$ with valued in its topological dual
$X^{*}$, the Fitzpatrick function associated to $T$ (introduced
in \cite[Definition\ 3.1,\ p.\ 61]{MR1009594}) denoted by
\[
\varphi_{T}(x,x^{*}):=\sup\{a^{*}(x-a)+x^{*}(a)\mid(a,a^{*})\in\operatorname*{Graph}T\},\ (x,x^{*})\in X\times X^{*},
\]
is an important tool in the theory of maximal monotone operators (see
e.g. \cite{MR1009594,MR2207807,MR2577332}). 

Here $\operatorname*{Graph}T=\{(x,x^{*})\in X\times X^{*}\mid x^{*}\in T(x)\}$
is the graph of $T$, $D(T)=\operatorname*{Pr}_{X}(\operatorname*{Graph}T)$
stands for the domain of $T$, $R(T)=\operatorname*{Pr}_{X^{*}}(\operatorname*{Graph}T)=\cup_{x\in D(T)}T(x)$
denotes the range of $T$, where $\operatorname*{Pr}_{X}$, $\operatorname*{Pr}_{X^{*}}$
are the projections of $X\times X^{*}$ onto $X$, $X^{*}$, respectively.
When no confusion can occur, $T:X\rightrightarrows X^{\ast}$ is identified
with $\operatorname*{Graph}T\subset X\times X^{*}$.

Our focus in this note is on the general behavior of $\varphi_{T}$
mainly with respect to the coupling of $X\times X^{*}$ as well as
the position of the domain of $\varphi_{T}$ in $X\times X^{*}$. 

In this paper, if not otherwise explicitly mentioned, $(X,\tau)$
is a Hausdorff separated locally convex space (LCS for short), $X^{\ast}$
is its topological dual endowed with the weak-star topology $w^{*}$,
and the topological dual of $(X^{*},w^{*})$ is identified with $X$.
For $x\in X$ and $x^{*}\in X^{*}$ we set $\langle x,x^{*}\rangle:=x^{*}(x)$.

For a subset $A$ of $X$ we denote by $\operatorname*{int}A$, $\operatorname*{cl}A$
(or $\operatorname*{cl}_{\tau}A$ when we wish to underline the topology
$\tau$, or $\operatorname*{cl}_{(X,\tau)}A$ if we want to emphasize
that $A\subset(X,\tau)$), $\operatorname*{aff}A$, and $\operatorname*{conv}A$
the interior, the closure, the affine hull, and the closure hull of
$A$, respectively. If $A,B\subset X$ we set $A+B:=\{a+b\mid a\in A,\ b\in B\}$
with the convention $A+\emptyset:=\emptyset+A:=\emptyset$. 

We consider the class $\Lambda(X)$ of proper convex functions $f:X\rightarrow\overline{\mathbb{R}}:=\mathbb{R}\cup\{-\infty,+\infty\}$
and the class $\Gamma_{\tau}(X)$ (or simply $\Gamma(X)$) of those
functions $f\in\Lambda(X)$ which are $\tau$\textendash lower semi\emph{\-}continuous
(lsc for short). Recall that $f$ is \emph{proper} if $\operatorname*{dom}f:=\{x\in X\mid f(x)<\infty\}$
is nonempty and $f$ does not take the value $-\infty$.

To $f:X\rightarrow\overline{\mathbb{R}}$ we associate its \emph{convex
hull} $\operatorname*{conv}f:X\rightarrow\overline{\mathbb{R}}$ and
its ($\tau$\textendash )\emph{lsc convex hull} $\operatorname*{cl}\operatorname*{conv}f:X\rightarrow\overline{\mathbb{R}}$
($\operatorname*{cl}_{\tau}\operatorname*{conv}f$ when we want to
accentuate on the topology $\tau$) defined by 
\begin{gather*}
(\operatorname*{conv}f)(x):=\inf\{t\in\mathbb{R}\mid(x,t)\in\operatorname*{conv}(\operatorname*{epi}f)\},\\
(\operatorname*{cl}\operatorname*{conv}f)(x):=\inf\{t\in\mathbb{R}\mid(x,t)\in\operatorname*{cl}\operatorname*{conv}(\operatorname*{epi}f)\},
\end{gather*}
 where $\operatorname*{epi}f:=\{(x,t)\in X\times\mathbb{R}\mid f(x)\leq t\}$
is the \emph{epigraph} of $f$.

The \emph{conjugate} of $f:X\rightarrow\overline{\mathbb{R}}$ with
respect to the dual system $(X,X^{\ast})$ is given by 
\begin{equation}
f^{\ast}:X^{\ast}\rightarrow\overline{\mathbb{R}},\quad f^{\ast}(x^{\ast}):=\sup\{\left\langle x,x^{\ast}\right\rangle -f(x)\mid x\in X\}.\label{r6}
\end{equation}
 The conjugate $f^{\ast}$ is a weakly-star (or $w^{\ast}-$) lsc
convex function. For the proper function $f:X\rightarrow\overline{\mathbb{R}}$
we define the \emph{sub\-differential} of $f$ at $x$ by 
\[
\partial f(x):=\{x^{\ast}\in X^{\ast}\mid\left\langle x^{\prime}-x,x^{\ast}\right\rangle \leq f(x^{\prime})-f(x)\ \forall x^{\prime}\in X\},
\]
 for $x\in\operatorname*{dom}f$ and $\partial f(x):=\emptyset$ for
$x\not\in\operatorname*{dom}f$. Recall that $N_{C}=\partial\iota_{C}$
is the \emph{normal cone} of $C$, where $\iota_{C}$ is the \emph{indicator
function} of $C\subset X$ defined by $\iota_{C}(x):=0$ for $x\in C$
and $\iota_{C}(x):=\infty$ for $x\in X\setminus C$.

When $X^{\ast}$ is endowed with the topology $w^{\ast}$ (or with
any other locally convex topology $\sigma$ such that $(X^{\ast},\sigma)^{\ast}=X$),
in other words, if we take conjugates for functions defined in $X^{\ast}$
with respect to the dual system $(X^{\ast},X)$, then $f^{\ast\ast}=(f^{\ast})^{\ast}=\operatorname*{cl}\operatorname*{conv}f$
whenever $\operatorname*{cl}\operatorname*{conv}f$ (or equivalently
$f^{\ast}$) is proper.

With respect to the dual system $(X,X^{*})$, the \emph{polar of}
$A\subset X$ is $A^{\circ}:=\{x^{*}\in X^{*}\mid|\langle x,x^{*}\rangle|\le1,\ \forall x\in A\}$
while the \emph{orthogonal of} $A$ is $A^{\perp}:=\{x^{*}\in X^{*}\mid\langle x,x^{*}\rangle=0,\ \forall x\in A\}$;
similarly, the\emph{ }polar of $B\subset X^{*}$ is $B^{\circ}:=\{x\in X\mid|\langle x,x^{*}\rangle|\le1,\ \forall x^{*}\in B\}$
while the orthogonal of $B$ is $B^{\perp}:=\{x\in X\mid\langle x,x^{*}\rangle=0,\ \forall x^{*}\in B\}$. 

Let $Z:=X\times X^{\ast}$. Consider the \emph{coupling} function
\[
c:Z\rightarrow\mathbb{R},\quad c(z):=\left\langle x,x^{\ast}\right\rangle \ \text{for}\ z:=(x,x^{\ast})\in Z.
\]
 It is known that the topological dual of $(Z,\tau\times w^{\ast})$
can be (and will be) identified with $Z$ by the coupling 
\[
z\cdot z^{\prime}:=\left\langle z,z^{\prime}\right\rangle :=\left\langle x,x^{\prime\ast}\right\rangle +\left\langle x^{\prime},x^{\ast}\right\rangle \quad\text{for }z=(x,x^{\ast}),\ z^{\prime}=(x^{\prime},x^{\prime\ast})\in Z.
\]
 With respect to the natural dual system $(Z,Z)$ induced by the previous
coupling, the Fitzpatrick function of $T:X\rightrightarrows X^{*}$
has the form
\[
\varphi_{T}:Z\rightarrow\overline{\mathbb{R}},\quad\varphi_{T}(z)=\sup\{z\cdot\alpha-c(\alpha)\mid\alpha\in\operatorname*{Graph}T\},
\]
while the conjugate of $f:Z\rightarrow\overline{\mathbb{R}}$ is denoted
by 
\[
f^{\square}:Z\rightarrow\overline{\mathbb{R}},\quad f^{\square}(z)=\sup\{z\cdot z^{\prime}-f(z^{\prime})\mid z^{\prime}\in Z\},
\]
 and $f^{\square\square}=\operatorname*{cl}_{\tau\times w^{\ast}}\operatorname*{conv}f$
whenever $f^{\square}$ (or $\operatorname*{cl}_{\tau\times w^{\ast}}\operatorname*{conv}f$)
is proper. 

Note the identity
\begin{equation}
\inf_{\alpha\in\operatorname*{Graph}T}c(z-\alpha)=(c-\varphi_{T})(z).\label{eq:-5}
\end{equation}

For $E$ a LCS and $f,g:E\rightarrow\overline{\mathbb{R}}$ we set
$[f\leq g]:=\{x\in E\mid f(x)\leq g(x)\}$; the sets $[f=g]$, $[f<g]$
and $[f>g]$ are defined similarly; while $f\ge g$ means $[f\ge g]=E$
or, for every $x\in E$, $f(x)\ge g(x)$. 

\medskip

Whenever $X$ is a normed vector space, besides $X^{\ast}$ we consider
the bi-dual $X^{\ast\ast}$. We identify $X$ with the linear subspace
$J(X)$ of $X^{\ast\ast}$, where $J:X\rightarrow X^{\ast\ast}$ is
the canonical injection: $\left\langle x^{\ast},Jx\right\rangle :=\left\langle x,x^{\ast}\right\rangle $
for $x\in X$ and $x^{\ast}\in X^{\ast}$, and we denote $Jx$ by
$\widehat{x}$ or simply $x$. In this case $Z=X\times X^{\ast}$
is seen as a normed vector space with the norm $\left\Vert z\right\Vert :=\big(\left\Vert x\right\Vert ^{2}+\left\Vert x^{\ast}\right\Vert ^{2}\big)^{1/2}$
for $z:=(x,x^{\ast})$. Its topological dual $Z^{\ast}$ is identified
with $X^{\ast}\times X^{\ast\ast}$ by the coupling 
\[
\left\langle (x,x^{\ast}),(y^{\ast},y^{\ast\ast})\right\rangle :=\left\langle x,y^{\ast}\right\rangle +\left\langle x^{\ast},y^{\ast\ast}\right\rangle ,\quad(x,x^{\ast})\in X\times X^{\ast},\ (y^{\ast},y^{\ast\ast})\in X^{\ast}\times X^{\ast\ast}.
\]
 In this context, for $f:Z\rightarrow\overline{\mathbb{R}}$, the
conjugate $f^{\ast}:Z^{\ast}\rightarrow\overline{\mathbb{R}}$ is
given by (\ref{r6}) while for $f^{\square}:Z\rightarrow\overline{\mathbb{R}}$
one has
\begin{equation}
f^{\square}(x,x^{\ast})=f^{\ast}(x^{\ast},Jx)=f^{\ast}(x^{\ast},x)\quad\forall(x,x^{\ast})\in Z.\label{r11}
\end{equation}
 Note that $(f^{\square})^{\ast}(x^{\ast},x)=f^{\square\square}(x,x^{\ast})=f(x,x^{\ast})$
when $f:X\times X^{\ast}\rightarrow\overline{\mathbb{R}}$ is a proper
convex $\tau\times w^{\ast}$\textendash lsc function.

A multi\-function $T:X\rightrightarrows X^{\ast}$ is said to be
\emph{monotone} if $c(z-z^{\prime})\geq0$ for all $z,z^{\prime}\in T$
and \emph{maximal monotone} if $T$ is monotone and maximal in the
sense of inclusion. In other terms, $T$ is maximal monotone if $T$
is monotone and any element $z\in Z$ which is \emph{monotonically
related to} $T$, that is, $c(z-z^{\prime})\geq0$ for every $z^{\prime}\in T$,
belongs to $T$. The classes of monotone and maximal monotone operators
$T:X\rightrightarrows X^{\ast}$ are denoted by $\mathcal{M}(X)$
and $\mathfrak{M}(X)$, respectively. It is well known that if $f\in\Lambda(X)$
has $f\ge c$ then
\[
[f=c]=\{z\in Z\mid f(z)=c(z)\}\in\mathcal{M}(X),
\]
(see e.g. \cite[Proposition 4(h)]{MR2086060} or \cite[Lemma\ 3.1]{MR2453098}).

\eject

\section{Estimates for Negative-Infimum operators}

Recall that an operator $T:X\rightrightarrows X^{*}$ is called of
\emph{Negative-Infimum}-type or simply NI if $\varphi_{T}\ge c$ in
$Z=X\times X^{*}$. 

With respect to the natural dual system $(Z,Z)$, \emph{the support
function} of $S\subset Z$ is given by $\sigma_{S}(p):=\sup_{s\in S}s\cdot p$,
$p\in\operatorname*{dom}\sigma_{S}\subset Z$, or $\sigma_{S}=\iota_{S}^{\square}$.
For simplicity, in the sequel we use the notation $\sigma_{T-z}:=\sigma_{\operatorname*{Graph}T-z}$. 

\begin{theorem} \label{first} Let $X$ be a LCS and let $T:X\rightrightarrows X^{*}$.
Then
\begin{equation}
\forall z,p\in Z=X\times X^{*},\ t\ge0,\ (\varphi_{T}-c)(z+tp)\le(\varphi_{T}-c)(z)-t^{2}c(p)+t\sigma_{T-z}(p).\label{main}
\end{equation}

If, in addition, $T$ is NI then
\begin{equation}
\forall z,p\in Z,\ t\ge0,\ (\varphi_{T}-c)(z)\ge t^{2}c(p)-t\sigma_{T-z}(p).\label{m2}
\end{equation}

If, in addition, $T$ is NI and $z\in\operatorname*{dom}\varphi_{T}$
then $\operatorname*{dom}\sigma_{T-z}\subset[c\le0]$ and $[c=0]\subset[\sigma_{T-z}\ge0]$. 

\end{theorem}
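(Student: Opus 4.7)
The plan is to derive (\ref{main}) from a polarization-type expansion of $c$, deduce (\ref{m2}) by plugging the NI hypothesis into (\ref{main}) at a translated point, and finally read (\ref{m2}) as a quadratic-in-$t$ inequality to extract the two inclusions in the last statement.

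For (\ref{main}), the starting point is the identity $c(w+w')=c(w)+c(w')+w\cdot w'$ for $w,w'\in Z$, which follows directly from the bilinearity of the pairing and the definition of $c$. Applied with $w=z-\alpha$ and $w'=tp$ for $\alpha\in\operatorname*{Graph}T$ and $t\geq 0$, it gives
\[
-c(z+tp-\alpha)=-c(z-\alpha)-t^{2}c(p)-t(z-\alpha)\cdot p.
\]
Taking the supremum over $\alpha\in\operatorname*{Graph}T$ and splitting the $\alpha$-dependent terms via $\sup(a+b)\leq\sup a+\sup b$ (direction preserved because $t\geq 0$), the first summand becomes $(\varphi_{T}-c)(z)$ thanks to (\ref{eq:-5}), while the linear summand yields $t(\sigma_{T-z}(p)+z\cdot p)-t\,z\cdot p=t\sigma_{T-z}(p)$, using $\sigma_{T-z}(p)=\sup_{\alpha}(\alpha-z)\cdot p$. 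This is exactly (\ref{main}).

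Inequality (\ref{m2}) is then immediate: NI forces $(\varphi_{T}-c)(z+tp)\geq 0$, so (\ref{main}) rearranges to $(\varphi_{T}-c)(z)\geq t^{2}c(p)-t\sigma_{T-z}(p)$ for every $z,p\in Z$ and $t\geq 0$.

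For the final assertion, assume $z\in\operatorname*{dom}\varphi_{T}$, so that $M:=(\varphi_{T}-c)(z)$ is a finite, nonnegative real number. Then (\ref{m2}) says the polynomial $q(t):=t^{2}c(p)-t\sigma_{T-z}(p)$ in the single variable $t\geq 0$ is bounded above by $M$. If $p\in\operatorname*{dom}\sigma_{T-z}$ then $\sigma_{T-z}(p)\in\mathbb{R}$ and the behaviour of $q$ as $t\to\infty$ forces the leading coefficient $c(p)$ to satisfy $c(p)\leq 0$, proving $\operatorname*{dom}\sigma_{T-z}\subset[c\leq 0]$. If instead $c(p)=0$, then $q(t)=-t\sigma_{T-z}(p)\leq M$ for every $t\geq 0$ forces $\sigma_{T-z}(p)\geq 0$, proving $[c=0]\subset[\sigma_{T-z}\geq 0]$. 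The only real obstacle is keeping the polarization algebra in (\ref{main}) straight through the supremum; everything else is a short scalar argument.
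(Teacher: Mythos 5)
Your proof is correct and follows essentially the same route as the paper's: the same polarization identity $c(w+w')=c(w)+c(w')+w\cdot w'$ applied with $w=z-\alpha$, followed by the same sup/inf splitting over $\operatorname*{Graph}T$ (the paper passes to the infimum of $c(z+p-\alpha)$ and then replaces $p$ by $tp$, which is only cosmetically different), and the same asymptotic analysis of $t\mapsto t^{2}c(p)-t\sigma_{T-z}(p)$ as $t\to\infty$ for the two final inclusions.
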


\begin{proof} We have
\[
\forall z,p,\alpha\in Z,\ c(z+p-\alpha)=c(z-\alpha)+c(p)+p\cdot(z-\alpha).
\]
After passing to infimum over $\alpha\in\operatorname*{Graph}T$,
one gets
\[
\forall z,p\in Z,\ (c-\varphi_{T})(z+p)\ge(c-\varphi_{T})(z)+c(p)+\inf_{\alpha\in\operatorname*{Graph}T}p\cdot(z-\alpha)\Leftrightarrow
\]
\[
\forall z,p\in Z,\ (\varphi_{T}-c)(z+p)\le(\varphi_{T}-c)(z)-c(p)+\sup_{\alpha\in\operatorname*{Graph}T}p\cdot(\alpha-z)\Leftrightarrow
\]
\[
\forall z,p\in Z,\ (\varphi_{T}-c)(z+p)\le(\varphi_{T}-c)(z)-c(p)+\sigma_{T-z}(p).
\]
Change $p$ into $tp$ for $t\ge0$ to get (\ref{main}). 

The inequality (\ref{m2}) follows from $T$ being NI, more precisely
from $(\varphi_{T}-c)(z+tp)\ge0$. 

Assume, in addition, that $T$ is NI and $z\in\operatorname*{dom}\varphi_{T}$. 

If $c(p)>0$ and $\sigma_{T-z}(p)<+\infty$ then $\lim_{t\to\infty}t^{2}c(p)-t\sigma_{T-z}(p)=\infty$
in contradiction to (\ref{m2}). 

If $c(p)=0$ then for every $t\ge0$, $(\varphi_{T}-c)(z)+t\sigma_{T-z}(p)\ge0$.
Divide by $t>0$ and let $t\rightarrow+\infty$ to get $\sigma_{T-z}(p)\ge0$.
\end{proof}

\begin{remark} \label{r1} A reformulation of the last part of the
previous theorem is: let $T$ be NI and $z\in\operatorname*{dom}\varphi_{T}$.
Then
\begin{equation}
c(p)>0\Rightarrow\sigma_{T-z}(p)=+\infty,\label{eq:}
\end{equation}
\begin{equation}
c(p)=0\Rightarrow\sigma_{T-z}(p)\ge0,\label{eq:-1}
\end{equation}
\begin{equation}
\sigma_{T-z}(p)<0\Rightarrow c(p)<0.\label{eq:-2}
\end{equation}
Indeed, for (\ref{eq:-2}), if $\sigma_{T-z}(p)<0$ then $p\in\operatorname*{dom}\sigma_{T-z}$
and, from the contrapositive forms of (\ref{eq:}), (\ref{eq:-1}),
one has $c(p)\le0$ and $c(p)\neq0$, i.e., $c(p)<0$. \end{remark}

\strut

Recall that \emph{the support function} of $A\subset X$ is given
by $\sigma_{A}(x^{*})=\sup\{\langle x,x^{*}\rangle\mid x\in A\}$,
$x^{*}\in X^{*}$ or $\sigma_{A}=\iota_{A}^{*}$. 

\begin{lemma} \label{argmin sigma} Let $(X,\tau)$ be a LCS and
let $A\subset X$. Then $\sigma_{A}\ge0$ iff $0\in\operatorname*{cl}_{\tau}\operatorname*{conv}A$.
\end{lemma}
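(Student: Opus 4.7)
The plan is to exploit the fact that the support function depends only on the closed convex hull, and then to invoke the Hahn--Banach separation theorem in one direction.

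First I would observe that for every $x^{*}\in X^{*}$ the map $\langle \cdot ,x^{*}\rangle$ is $\tau$-continuous and affine, so its supremum is unchanged when $A$ is replaced by its convex hull or its $\tau$-closure. Consequently, setting $C:=\operatorname{cl}_{\tau}\operatorname{conv}A$, one has $\sigma_{A}=\sigma_{\operatorname{conv}A}=\sigma_{C}$. This reduction lets me compare $\sigma_{A}$ directly with the geometry of $C$.

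For the implication ``$\Leftarrow$,'' assume $0\in C$. Then for every $x^{*}\in X^{*}$,
\[
\sigma_{A}(x^{*})=\sigma_{C}(x^{*})\ge \langle 0,x^{*}\rangle =0,
\]
which gives $\sigma_{A}\ge 0$. For the implication ``$\Rightarrow$,'' I argue by contraposition: suppose $0\notin C$. If $A=\emptyset$, then $\sigma_{A}\equiv -\infty$ and the hypothesis $\sigma_{A}\ge 0$ already fails. Otherwise $C$ is a nonempty $\tau$-closed convex subset of $X$ disjoint from the compact convex set $\{0\}$; since $(X,\tau)$ is a Hausdorff LCS, the Hahn--Banach strong separation theorem yields $x^{*}\in X^{*}$ and $\alpha\in\mathbb{R}$ such that $\langle x,x^{*}\rangle \le \alpha <0$ for all $x\in C$. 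Taking the supremum over $x\in C$ gives $\sigma_{A}(x^{*})=\sigma_{C}(x^{*})\le \alpha <0$, so $\sigma_{A}\not\ge 0$.

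The only genuinely non-trivial ingredient is the strong separation of $\{0\}$ from the disjoint closed convex set $C$ in a general LCS, which is the standard geometric form of Hahn--Banach available in this setting. Everything else is bookkeeping about support functions, so I do not expect any real obstacle beyond making sure the empty-set edge case is handled.
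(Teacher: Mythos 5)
Your proof is correct, but it follows a different route from the paper's. The paper argues entirely through conjugate duality: since $\sigma_{A}(0)=0$, the condition $\sigma_{A}\ge0$ is equivalent to $0\in\arg\min\sigma_{A}$, hence to $0\in\partial\sigma_{A}(0)$, hence (by the Fenchel equality) to $\sigma_{A}(0)+\sigma_{A}^{*}(0)=0$, and finally the biconjugate formula $\sigma_{A}^{*}=\iota_{A}^{**}=\iota_{\operatorname{cl}_{\tau}\operatorname{conv}A}$ converts this into $0\in\operatorname{cl}_{\tau}\operatorname{conv}A$. You instead reduce to $\sigma_{A}=\sigma_{C}$ with $C:=\operatorname{cl}_{\tau}\operatorname{conv}A$ and apply the geometric Hahn--Banach strong separation theorem to the point $0$ and the closed convex set $C$ in the contrapositive direction. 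The two arguments are essentially dual faces of the same fact --- the biconjugate formula the paper invokes is itself proved by separation --- but yours is the more elementary and self-contained version, while the paper's is shorter given that the convex-analytic toolkit ($\arg\min$ via subdifferentials, Fenchel equality, biconjugation of indicator functions) is already in force throughout the article. A minor point in your favor: you explicitly dispose of the case $A=\emptyset$, where $\sigma_{A}\equiv-\infty$, whereas the paper's opening claim $\sigma_{A}(0)=0$ tacitly assumes $A$ nonempty.
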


\begin{proof} Since $\sigma_{A}(0)=0$, using the biconjugate formula
we have
\[
\sigma_{A}\ge0\Leftrightarrow0\in\arg\min\sigma_{A}\Leftrightarrow0\in\partial\sigma_{A}(0)\Leftrightarrow\sigma_{A}(0)+\sigma_{A}^{*}(0)=0\Leftrightarrow
\]
\[
\iota_{\operatorname*{cl}_{\tau}\operatorname*{conv}A}(0)=0\Leftrightarrow0\in\operatorname*{cl}\,\!_{\tau}\operatorname*{conv}A.
\]
\end{proof}

\begin{proposition} Let $(X,\tau)$ be a LCS and let $T:X\rightrightarrows X^{*}$.
Then
\[
\Pr\nolimits _{X}(\operatorname*{dom}\varphi_{T})\subset\Pr\nolimits _{X}[\varphi_{T}=c]\cup\operatorname*{cl}\!\,_{\tau}\operatorname*{aff}D(T),\ \Pr\nolimits _{X^{*}}(\operatorname*{dom}\varphi_{T})\subset\Pr\nolimits _{X^{*}}[\varphi_{T}=c]\cup\operatorname*{cl}\!\,_{w^{*}}\operatorname*{aff}D(T).
\]
\end{proposition}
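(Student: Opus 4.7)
The plan is to prove each inclusion by contraposition with a single device: if $x$ lies outside $\operatorname*{cl}_{\tau}\operatorname*{aff}D(T)$, then a Hahn--Banach functional separating $x$ from this closed affine subspace will furnish a perturbation of $x^{*}$ along which $\varphi_{T}$ and $c$ grow at different linear rates. Tuning the perturbation parameter then forces a point with first coordinate $x$ to land in $[\varphi_{T}=c]$.

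First I would fix $z=(x,x^{*})\in\operatorname*{dom}\varphi_{T}$, set $M:=\operatorname*{cl}_{\tau}\operatorname*{aff}D(T)$, and assume $x\notin M$ (otherwise the first inclusion is immediate, and the degenerate case $\operatorname*{Graph}T=\emptyset$ is excluded by $\varphi_{T}(z)\in\mathbb{R}$). Fixing any $m_{0}\in D(T)\subset M$, the set $M-m_{0}$ is a closed linear subspace of $(X,\tau)$ missing $x-m_{0}$, so Hahn--Banach yields $p^{*}\in X^{*}$ vanishing on $M-m_{0}$ with $\beta:=\langle x-m_{0},p^{*}\rangle\neq0$. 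Setting $\gamma:=\langle m_{0},p^{*}\rangle$, one gets $\langle a,p^{*}\rangle=\gamma$ for every $a\in D(T)$ and $\langle x,p^{*}\rangle=\gamma+\beta$.

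The key step is to substitute $x^{*}\mapsto x^{*}+tp^{*}$ in the defining supremum of $\varphi_{T}$: the added $tp^{*}$ couples only with the $D(T)$-component of $\operatorname*{Graph}T$, on which $p^{*}$ is constant equal to $\gamma$, so the supremum shifts uniformly. Hence $\varphi_{T}(x,x^{*}+tp^{*})=\varphi_{T}(z)+t\gamma$ while $c(x,x^{*}+tp^{*})=c(z)+t(\gamma+\beta)$, giving
\[
(\varphi_{T}-c)(x,x^{*}+tp^{*})=(\varphi_{T}-c)(z)-t\beta.
\]
Choosing $t:=(\varphi_{T}-c)(z)/\beta$ places $(x,x^{*}+tp^{*})$ into $[\varphi_{T}=c]$, so $x\in\Pr\nolimits_{X}[\varphi_{T}=c]$. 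The second inclusion follows by the mirror-image argument under the pairing $(X^{*},X)$ (using $(X^{*},w^{*})^{*}=X$): if $x^{*}\notin\operatorname*{cl}_{w^{*}}\operatorname*{aff}R(T)$, separation produces $p\in X$ constant on $R(T)$, and the analogous perturbation $x\mapsto x+tp$ with suitably tuned $t$ produces a point in $[\varphi_{T}=c]$ with second coordinate $x^{*}$.

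The only real step is the Hahn--Banach separation of a point from a closed affine subspace, which reduces to the standard separation theorem for a closed linear subspace after translation by $m_{0}$. Once the correct perturbation direction is identified, the rest is an affine calculation that dualizes symmetrically and yields both inclusions.
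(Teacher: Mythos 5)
Your proof is correct and follows essentially the same route as the paper: both select a functional annihilating the linear part of $\operatorname*{aff}D(T)$ (you via Hahn--Banach separation, the paper via $S=S^{\perp\perp}$), observe that $\varphi_{T}(x,x^{*}+tp^{*})$ shifts by $t\gamma$ while $c$ shifts by $t(\gamma+\beta)$ with $\beta\neq0$, and tune $t$ to land in $[\varphi_{T}=c]$. Your reading of the second inclusion with $R(T)$ in place of the paper's (apparently typographical) $D(T)$ matches the intended symmetric statement.
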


 \begin{proof} The conclusion holds if $\varphi_{T}$ is improper
(which is possible even when $T$ is non-void). That is why, we assume
that $\operatorname*{dom}\varphi_{T}\neq\emptyset$.

Let $z=(x,x^{*})\in\operatorname*{dom}\varphi_{T}$ with $x\not\in\operatorname*{cl}\!\,_{\tau}\operatorname*{aff}D(T)=:F$.
Fix $a_{0}\in F$ and denote by $S:=F-a_{0}$ the subspace parallel
to $F$. Since $x-a_{0}\not\in S=S^{\perp\perp}$, there is $u^{*}\in S^{\perp}$,
such that $\langle x-a_{0},u^{*}\rangle\neq0$. Note that $\varphi_{T}(x,x^{*}+tu^{*})=\varphi_{T}(x,x^{*})+t\langle a_{0},u^{*}\rangle$,
for every $t\in\mathbb{R}$. This yields that $(x,x^{*}+\gamma u^{*})\in[\varphi_{T}=c]$,
for $\gamma=(\varphi_{T}-c)(z)/\langle x-a_{0},u^{*}\rangle$. 

For the second inclusion we proceed similarly. \end{proof}

\begin{proposition} Let $(X,\tau)$ be a LCS and let $T:X\rightrightarrows X^{*}$.
If $z=(x,x^{*})\in\operatorname*{dom}\varphi_{T}$ then $\inf_{Z}(\varphi_{T}-c)=-\infty$
or $x\in\operatorname*{cl}_{\tau}\operatorname*{conv}D(T)$, $x^{*}\in\operatorname*{cl}\!\,_{w^{*}}\operatorname*{conv}R(T)$. 

In particular
\begin{equation}
\operatorname*{Pr}\!\,_{X}\operatorname*{dom}\varphi_{T}\subset\operatorname*{Pr}\!\,_{X}[\varphi_{T}\le c]\cup\operatorname*{cl}\!\,_{\tau}\operatorname*{conv}D(T),\ \operatorname*{Pr}\!\,_{X^{*}}\operatorname*{dom}\varphi_{T}\subset\operatorname*{Pr}\!\,_{X^{*}}[\varphi_{T}\le c]\cup\operatorname*{cl}\!\,_{w^{*}}\operatorname*{conv}R(T).\label{m9}
\end{equation}
\end{proposition}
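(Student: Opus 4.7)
The plan is to apply the key inequality (\ref{main}) from Theorem \ref{first} to directions $p$ with vanishing coupling, so that the $-t^{2}c(p)$ term disappears and one is left with an affine bound in $t$. Concretely, I would take $p=(0,p_{2}^{\ast})\in Z$ and $p=(p_{1},0)\in Z$; both satisfy $c(p)=0$, and the support functions split nicely: for $z=(x,x^{\ast})$,
\[
\sigma_{T-z}(0,p_{2}^{\ast})=\sup_{(a,a^{\ast})\in T}\langle a-x,p_{2}^{\ast}\rangle=\sigma_{D(T)-x}(p_{2}^{\ast}),
\]
and analogously $\sigma_{T-z}(p_{1},0)=\sigma_{R(T)-x^{\ast}}(p_{1})$ (the latter viewed in the LCS $(X^{\ast},w^{\ast})$).

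For the dichotomy, I would assume $\inf_{Z}(\varphi_{T}-c)=:m>-\infty$ and use (\ref{main}) with $p=(0,p_{2}^{\ast})$ to get, for every $t\ge0$,
\[
m\le(\varphi_{T}-c)(z+tp)\le(\varphi_{T}-c)(z)+t\,\sigma_{D(T)-x}(p_{2}^{\ast}).
\]
Since $(\varphi_{T}-c)(z)$ is finite ($z\in\operatorname*{dom}\varphi_{T}$), dividing by $t>0$ and letting $t\to\infty$ forces $\sigma_{D(T)-x}(p_{2}^{\ast})\ge0$ for every $p_{2}^{\ast}\in X^{\ast}$. Lemma \ref{argmin sigma} then yields $0\in\operatorname*{cl}_{\tau}\operatorname*{conv}(D(T)-x)$, i.e., $x\in\operatorname*{cl}_{\tau}\operatorname*{conv}D(T)$. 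The symmetric argument with $p=(p_{1},0)$ yields $x^{\ast}\in\operatorname*{cl}_{w^{\ast}}\operatorname*{conv}R(T)$.

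For the ``in particular'' inclusions (\ref{m9}), I would argue by contrapositive on the first inclusion (the second being symmetric). Let $z=(x,x^{\ast})\in\operatorname*{dom}\varphi_{T}$ with $x\notin\operatorname*{cl}_{\tau}\operatorname*{conv}D(T)$; Lemma \ref{argmin sigma} supplies $p_{2}^{\ast}\in X^{\ast}$ with $\sigma_{D(T)-x}(p_{2}^{\ast})<0$. Taking $p=(0,p_{2}^{\ast})$ in (\ref{main}), the right-hand side tends to $-\infty$ as $t\to\infty$, so for $t$ sufficiently large $(\varphi_{T}-c)(x,x^{\ast}+tp_{2}^{\ast})<0$. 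Since the first coordinate of $z+tp$ is unchanged, this witnesses $x\in\Pr_{X}[\varphi_{T}\le c]$, completing the inclusion.

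I do not anticipate a serious obstacle; the decisive idea is the choice of $p$ with $c(p)=0$, so that (\ref{main}) becomes affine in $t$ and both the $t\to\infty$ limit (for the dichotomy) and the eventual negativity of $\varphi_{T}-c$ along the ray (for (\ref{m9})) become available. The only mildly delicate point is keeping the two sides of the duality $(X,X^{\ast})$ straight when applying Lemma \ref{argmin sigma} to $R(T)-x^{\ast}\subset X^{\ast}$ with respect to the topology $w^{\ast}$, so that the closure that appears is $\operatorname*{cl}_{w^{\ast}}\operatorname*{conv}R(T)$ as required.
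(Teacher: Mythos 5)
Your proposal is correct and follows essentially the same route as the paper: choosing $p=(0,u^{*})$ and $p=(u,0)$ so that $c(p)=0$ in (\ref{main}), identifying $\sigma_{T-z}(0,u^{*})=\sigma_{D(T)-x}(u^{*})$, letting $t\to\infty$ to force the support function to be nonnegative, and invoking Lemma \ref{argmin sigma}. Your contrapositive argument for (\ref{m9}) is exactly the mechanism behind the paper's assertion that $\inf_{v^{*}}(\varphi_{T}-c)(x,v^{*})=-\infty$ when $x\notin\operatorname{cl}_{\tau}\operatorname{conv}D(T)$.
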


\begin{proof} Assume that $z=(x,x^{*})\in\operatorname*{dom}\varphi_{T}$
and $\inf_{Z}(\varphi_{T}-c)>-\infty$. For every $u^{*}\in X^{*},$
pick $p=(0,u^{*})$ in (\ref{main}), noticing that $\sigma_{T-z}(0,u^{*})=\sigma_{D(T)-x}(u^{*})$. 

Taking into consideration that $c(p)=0$, we get 
\[
\forall u^{*}\in X^{*},\ \forall t\ge0,\ (\varphi_{T}-c)(z)+t\sigma_{D(T)-x}(u^{*})\ge\inf_{v^{*}\in X^{*}}(\varphi_{T}-c)(x,v^{*})\ge\inf_{Z}(\varphi_{T}-c)>-\infty.
\]
That yields $\sigma_{D(T)-x}\ge0$ from which, according to Lemma
\ref{argmin sigma}, $x\in\operatorname*{cl}_{\tau}\operatorname*{conv}D(T)$. 

Similarly, pick $p=(u,0)$, $u\in X$ to get $x^{*}\in\operatorname*{cl}\!\,_{w^{*}}\operatorname*{conv}R(T)$. 

In particular let $x\in\operatorname*{Pr}{}_{X}\operatorname*{dom}\varphi_{T}$,
that is, for some $x^{*}\in X^{*}$, $z=(x,x^{*})\in\operatorname*{dom}\varphi_{T}$.
If $x\not\in\operatorname*{cl}_{\tau}\operatorname*{conv}D(T)$ then
$\inf_{v^{*}\in X^{*}}(\varphi_{T}-c)(x,v^{*})=-\infty$. There exists
$v^{*}\in X^{*}$ such that $(\varphi_{T}-c)(x,v^{*})\le0$, that
is, $x\in\operatorname*{Pr}_{X}[\varphi_{T}\le c]$. 

The second inclusion in (\ref{m9}) follows a similar argument. \end{proof}

\begin{remark} The previous result is an extension of \cite[Lemma\ 3,\ p.\ 279]{MR3492118}.
More precisely we proved, in more detail, that if $z=(x,x^{*})\in\operatorname*{dom}\varphi_{T}$
then $\inf_{v^{*}\in X^{*}}(\varphi_{T}-c)(x,v^{*})=-\infty$ or $x\in\operatorname*{cl}_{\tau}\operatorname*{conv}D(T)$
and, by symmetry, $\inf_{v\in X}(\varphi_{T}-c)(v,x^{*})=-\infty$
or $x^{*}\in\operatorname*{cl}\!\,_{w^{*}}\operatorname*{conv}R(T)$.
\end{remark} 

\begin{corollary} Let $(X,\tau)$ be a LCS and let $T:X\rightrightarrows X^{*}$.
If $\inf_{Z}(\varphi_{T}-c)>-\infty$, in particular if $T$ is NI,
then $\operatorname*{Pr}\!\,_{X}\operatorname*{dom}\varphi_{T}\subset\operatorname*{cl}\!\,_{\tau}\operatorname*{conv}D(T)$,
$\operatorname*{Pr}\!\,_{X^{*}}\operatorname*{dom}\varphi_{T}\subset\operatorname*{cl}\!\,_{w^{*}}\operatorname*{conv}R(T)$.
\end{corollary}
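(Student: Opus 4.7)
The plan is to derive this corollary essentially as a one-line consequence of the previous proposition, since the latter already supplies the dichotomy we need. First I would verify the parenthetical ``in particular'' claim: if $T$ is NI then by definition $\varphi_{T}\ge c$ on $Z$, so $\inf_{Z}(\varphi_{T}-c)\ge 0>-\infty$, which shows that the NI assumption is indeed a special case of the hypothesis $\inf_{Z}(\varphi_{T}-c)>-\infty$.

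Next, I would take an arbitrary $z=(x,x^{*})\in\operatorname*{dom}\varphi_{T}$ and invoke the previous proposition, which yields the dichotomy: either $\inf_{Z}(\varphi_{T}-c)=-\infty$, or else $x\in\operatorname*{cl}_{\tau}\operatorname*{conv}D(T)$ and $x^{*}\in\operatorname*{cl}_{w^{*}}\operatorname*{conv}R(T)$. Under our standing hypothesis the first alternative is excluded, so both containments must hold. Taking projections then gives $\operatorname*{Pr}_{X}\operatorname*{dom}\varphi_{T}\subset\operatorname*{cl}_{\tau}\operatorname*{conv}D(T)$ and $\operatorname*{Pr}_{X^{*}}\operatorname*{dom}\varphi_{T}\subset\operatorname*{cl}_{w^{*}}\operatorname*{conv}R(T)$.

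There is essentially no obstacle here, as the content has been absorbed into the proposition. If one wanted to give a self-contained argument without citing the proposition, the key estimate to redo would be the one from the proposition's proof: choose $p=(0,u^{*})$ in inequality (\ref{main}), observe that $c(p)=0$ and $\sigma_{T-z}(0,u^{*})=\sigma_{D(T)-x}(u^{*})$, and use the lower bound $\inf_{Z}(\varphi_{T}-c)>-\infty$ to conclude $\sigma_{D(T)-x}\ge 0$; then Lemma \ref{argmin sigma} gives $x\in\operatorname*{cl}_{\tau}\operatorname*{conv}D(T)$. The second inclusion follows by the symmetric choice $p=(u,0)$ with $u\in X$, together with the analogous identity $\sigma_{T-z}(u,0)=\sigma_{R(T)-x^{*}}(u)$ (in the $(X^{*},w^{*})$ duality) and Lemma \ref{argmin sigma} applied in $X^{*}$.
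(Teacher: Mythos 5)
Your proof is correct and takes the same route as the paper: the corollary is stated there without a separate proof precisely because it is the immediate specialization of the preceding proposition to the case where the alternative $\inf_{Z}(\varphi_{T}-c)=-\infty$ is excluded, and your observation that NI gives $\inf_{Z}(\varphi_{T}-c)\ge 0$ handles the ``in particular'' clause. Your optional self-contained argument (choosing $p=(0,u^{*})$ and $p=(u,0)$ in (\ref{main}) and invoking Lemma \ref{argmin sigma}) simply reproduces the proposition's own proof, so nothing is missing.
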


\begin{theorem} Let $(X,\tau)$ be a LCS and let $T:X\rightrightarrows X^{*}$
be NI. Then
\begin{equation}
\begin{aligned}\forall z\in Z,\ (\varphi_{T}-c)(z) & \ge\sup\{-\frac{\sigma_{T-z}^{2}(p)}{4c(p)}\mid\sigma_{T-z}(p)<0\}\\
 & =\sup\{\frac{1}{4}\sigma_{T-z}^{2}(p)\mid c(p)=-1,\ \sigma_{T-z}(p)<0\};
\end{aligned}
\label{m3}
\end{equation}
\begin{equation}
\forall z,p\in Z,\ \sigma_{T-z}(p)+2\sqrt{(\varphi_{T}-c)(z)}\cdot\sqrt{|c(p)|}\ge0.\label{m4}
\end{equation}

\begin{equation}
\mathcal{M}(X)\ni[\varphi_{T}\le c]=[\varphi_{T}=c]\subset\operatorname*{cl}\!\,_{\tau\times w^{*}}\operatorname*{conv}(\operatorname*{Graph}T).\label{m6}
\end{equation}

In particular
\begin{equation}
\operatorname*{Pr}\!\,_{X}\operatorname*{dom}\varphi_{T}\subset\operatorname*{cl}\!\,_{\tau}\operatorname*{conv}D(T),\ \operatorname*{Pr}\!\,_{X^{*}}\operatorname*{dom}\varphi_{T}\subset\operatorname*{cl}\!\,_{w^{*}}\operatorname*{conv}R(T).\label{m8}
\end{equation}
\end{theorem}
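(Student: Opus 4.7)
The plan is to derive the four assertions from Theorem \ref{first}, in particular from the quadratic estimate (\ref{m2}), together with Lemma \ref{argmin sigma} and Remark \ref{r1}.

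For (\ref{m3}), I would fix $z,p\in Z$ with $\sigma_{T-z}(p)<0$ and treat the right-hand side of (\ref{m2}) as a one-variable quadratic $t\mapsto q(t):=t^{2}c(p)-t\sigma_{T-z}(p)$. By (\ref{eq:-2}) of Remark \ref{r1}, the hypothesis $\sigma_{T-z}(p)<0$ forces $c(p)<0$, so $q$ is a downward-opening parabola whose vertex is at $t^{\ast}=\sigma_{T-z}(p)/(2c(p))>0$ and whose maximum value is $-\sigma_{T-z}^{2}(p)/(4c(p))$. Plugging $t^{\ast}$ into (\ref{m2}) yields the first expression. The second equality follows by homogeneity: replacing $p$ by $p/\sqrt{-c(p)}$ rescales $c(p)$ to $-1$ and the supremum is unchanged. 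I should also handle the trivial case $z\notin\operatorname*{dom}\varphi_{T}$ (the left side is $+\infty$) and $(\varphi_{T}-c)(z)=+\infty$ separately; the set in the supremum is allowed to be empty, in which case the supremum is $-\infty$.

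For (\ref{m4}), the inequality is trivial when $\sigma_{T-z}(p)\ge0$. Otherwise $c(p)<0$, and (\ref{m3}) gives $4(\varphi_{T}-c)(z)\,|c(p)|\ge\sigma_{T-z}^{2}(p)$; since $T$ is NI we have $(\varphi_{T}-c)(z)\ge0$, so taking square roots and using $-\sigma_{T-z}(p)=|\sigma_{T-z}(p)|$ yields the stated bound.

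For (\ref{m6}), the equality $[\varphi_{T}\le c]=[\varphi_{T}=c]$ is immediate from $\varphi_{T}\ge c$ (the NI hypothesis), and membership in $\mathcal{M}(X)$ is the fact recalled at the end of the preliminaries. The content is the inclusion into $\operatorname*{cl}_{\tau\times w^{\ast}}\operatorname*{conv}(\operatorname*{Graph}T)$. Given $z\in[\varphi_{T}=c]$, inequality (\ref{m2}) reads $0\ge t^{2}c(p)-t\sigma_{T-z}(p)$ for every $p\in Z$ and $t\ge0$. Dividing by $t>0$ gives $\sigma_{T-z}(p)\ge t\,c(p)$, and letting $t\downarrow0$ yields $\sigma_{T-z}(p)\ge0$ for all $p\in Z$. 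This is exactly the hypothesis of Lemma \ref{argmin sigma} applied to the set $\operatorname*{Graph}T-z\subset(Z,\tau\times w^{\ast})$, so $0\in\operatorname*{cl}_{\tau\times w^{\ast}}\operatorname*{conv}(\operatorname*{Graph}T-z)$, i.e., $z\in\operatorname*{cl}_{\tau\times w^{\ast}}\operatorname*{conv}(\operatorname*{Graph}T)$. Finally, (\ref{m8}) follows directly from the Corollary immediately preceding the theorem, since the NI hypothesis gives $\inf_{Z}(\varphi_{T}-c)\ge0>-\infty$.

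I do not anticipate a real obstacle; the only subtle points are the sign-tracking in the optimization of $q(t)$ (which relies critically on Remark \ref{r1} to know $c(p)<0$ whenever $\sigma_{T-z}(p)<0$) and the $t\downarrow0$ limit in the proof of (\ref{m6}), which is what converts the quadratic estimate into the purely linear condition needed to invoke Lemma \ref{argmin sigma}.
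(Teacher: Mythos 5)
Your proof is correct and follows essentially the same route as the paper: optimize the quadratic $t\mapsto t^{2}c(p)-t\sigma_{T-z}(p)$ from (\ref{m2}) at its vertex (using Remark \ref{r1} to ensure $c(p)<0$) to get (\ref{m3}) and hence (\ref{m4}), then feed $\sigma_{T-z}\ge0$ on $[\varphi_{T}=c]$ into Lemma \ref{argmin sigma} for (\ref{m6}). The only immaterial differences are that you extract $\sigma_{T-z}\ge0$ directly from (\ref{m2}) via $t\downarrow0$ rather than from (\ref{m4}), and you obtain (\ref{m8}) from the preceding corollary instead of from (\ref{m6}) combined with (\ref{m9}).
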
 

\begin{proof} The inequality (\ref{m3}) holds for $z\not\in\operatorname*{dom}\varphi_{T}$.
If $z\in\operatorname*{dom}\varphi_{T}$ and $\sigma_{T-z}(p)<0$
then $c(p)<0$ because $T$ is NI (see Remark \ref{r1}); in particular,
the right hand side of the inequality in (\ref{m3}) is well-defined.
According to (\ref{m2}), 
\[
(\varphi_{T}-c)(z)\ge\max_{t\ge0}\{t^{2}c(p)-t\sigma_{T-z}(p)\}=t_{V}^{2}c(p)-t_{V}\sigma_{T-z}(p)=-\frac{\sigma_{T-z}^{2}(p)}{4c(p)},
\]
where $t_{V}:=\frac{\sigma_{T-z}(p)}{2c(p)}>0$, and so (\ref{m3})
is proved. 

According to Remark \ref{r1}, more precisely from (\ref{eq:}), (\ref{eq:-1}),
relation (\ref{m4}) holds for $z\not\in\operatorname*{dom}\varphi_{T}$
or if $\sigma_{T-z}(p)\ge0$ or $c(p)\ge0$. Assume that $z\in\operatorname*{dom}\varphi_{T}$
and $\sigma_{T-z}(p)<0$, from which, again $c(p)<0$. 

The inequality in (\ref{m3}) provides $-4c(p)(\varphi_{T}-c)(z)\ge\sigma_{T-z}^{2}(p)$,
$2\sqrt{-c(p)}\cdot\sqrt{(\varphi_{T}-c)(z)}\ge|\sigma_{T-z}(p)|=-\sigma_{T-z}(p)$
and (\ref{m4}) is proved. 

The inclusion in (\ref{m6}) is a direct consequence of (\ref{m4})
and Lemma \ref{argmin sigma} while (\ref{m8}) follows from (\ref{m6})
and (\ref{m9}). \end{proof}

\begin{remark} \label{r2} When $T$ is monotone and NI, (\ref{m8})
is equivalent to
\begin{equation}
\operatorname*{cl}\!\,_{\tau}\operatorname*{Pr}\!\,_{X}\operatorname*{dom}\varphi_{T}=\operatorname*{cl}\!\,_{\tau}\operatorname*{conv}D(T),\ \operatorname*{cl}\!\,_{w^{*}}\operatorname*{Pr}\!\,_{X^{*}}\operatorname*{dom}\varphi_{T}=\operatorname*{cl}\!\,_{w^{*}}\operatorname*{conv}R(T),\label{eq:-6}
\end{equation}
because in this case $\operatorname*{Graph}T\subset[\varphi_{T}=c]$
(see e.g. \cite[Theorem\ 3.4,\ p.\ 61]{MR1009594}), so $D(T)\subset\operatorname*{Pr}_{X}\operatorname*{dom}\varphi_{T}$,
$R(T)\subset\operatorname*{Pr}_{X^{*}}\operatorname*{dom}\varphi_{T}$. 

The equlities in (\ref{eq:-6}) are a subtle extension of \cite[Lemma\ 3,\ (7),\ p.\ 279]{MR3492118}.
Recall that for $T$ a monotone NI operator, $[\varphi_{T}=c]$ is
the only maximal monotone extension of $T$ and $\varphi_{T}=\varphi_{[\varphi_{T}=c]}$
(see \cite[Proposition\ 4,\ p.\ 35]{MR2594359}). Under these assumptions
(\ref{eq:-6}) implies
\begin{equation}
\operatorname*{cl}\!\,_{\tau}\operatorname*{conv}D(T)=\operatorname*{cl}\!\,_{\tau}\operatorname*{Pr}\!\,_{X}\operatorname*{dom}\varphi_{T}=\operatorname*{cl}\!\,_{\tau}\operatorname*{conv}\operatorname*{Pr}\!\,_{X}[\varphi_{T}=c].\label{eq:-7}
\end{equation}

In general, even if $T$ is NI, its domain $D(T)$ is not necessarily
a subset of $\operatorname*{Pr}_{X}\operatorname*{dom}\varphi_{T}$.
For example $T:D(T)=\mathbb{R}\rightrightarrows\mathbb{R}$, $T(x)=\{0\}$,
if $x\neq0$; $T(0)=\mathbb{R}$, namely, $\operatorname*{Graph}T$
is the union of the $x$ and $y-$axes in the plane, has $\varphi_{T}=\iota_{\{(0,0)\}}\ge c$;
whence $D(T)=\mathbb{R}\not\subset\operatorname*{Pr}_{X}\operatorname*{dom}\varphi_{T}=\{0\}$.
\end{remark}

When $(X,\|\cdot\|)$ is a normed space and $T:X\rightrightarrows X^{*}$
we can see $T:X^{**}\rightrightarrows X^{*}$ and $T^{-1}:X^{*}\rightrightarrows X^{**}$
through the canonical injection $J:X\rightarrow X^{\ast\ast}$. More
precisely, 
\[
x^{*}\in T(x^{**})\Leftrightarrow\exists x\in X,\ x^{**}=Jx\ {\rm and}\ x^{*}\in T(x).
\]
In the next result $(X^{**}\times X^{*},w^{*})$ denotes the space
$X^{**}\times X^{*}$ endowed with its weak-star topology (that comes
from the duality$(X^{**}\times X^{*},X^{*}\times X)$); while for
$z=(x,x^{*})\in Z=X\times X^{*}$, $z^{T}:=(x^{*},x)\in Z^{*}=X^{*}\times X^{**}$. 

\begin{proposition} \label{est-norm} Let $(X,\|\cdot\|)$ be a normed
space and let $T:X\rightrightarrows X^{*}$ be NI. Then
\begin{equation}
\forall z\in Z,\ (\varphi_{T}-c)(z)\ge\frac{1}{2}{\rm dist}^{2}(z,\operatorname*{cl}\!\,_{(X^{**}\times X^{*},w^{*})}\operatorname*{conv}\operatorname*{Graph}T),\label{m5}
\end{equation}
where for $z\in X^{**}\times X^{*}$, $S\subset X^{**}\times X^{*}$,
${\rm dist}(z,S)=\inf\{\|z-u\|_{X^{**}\times X^{*}}\mid u\in S\}$
and $\|(x^{**},x^{*})\|_{X^{**}\times X^{*}}=\sqrt{\|x^{**}\|^{2}+\|x^{*}\|^{2}}$,
$x^{**}\in X^{**}$, $x^{*}\in X^{*}$. 

If, in addition, $X$ is reflexive then 
\begin{equation}
\forall z\in Z,\ (\varphi_{T}-c)(z)\ge\frac{1}{2}{\rm dist}^{2}(z,\operatorname*{conv}\operatorname*{Graph}T),\label{m7}
\end{equation}

\end{proposition}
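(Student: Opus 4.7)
The plan is to upgrade (\ref{m4}) into a quantitative separation statement placing $z$ near $C:=\operatorname*{cl}_{(W,w^{*})}\operatorname*{conv}\operatorname*{Graph}T$ inside $W:=X^{**}\times X^{*}$, and then to extract the distance bound via Hahn-Banach separation in the LCS $(W,w^{*})$.

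First I would invoke the elementary estimate $|c(p)|\le\tfrac{1}{2}\|p\|^{2}$ on $Z=X\times X^{*}$, which follows from $|\langle x',x'^{*}\rangle|\le\|x'\|\,\|x'^{*}\|\le\tfrac{1}{2}(\|x'\|^{2}+\|x'^{*}\|^{2})$. Substituting into (\ref{m4}) and setting $M:=\sqrt{2(\varphi_{T}-c)(z)}$ gives, for every $p\in Z$,
\[
p\cdot z-\sigma_{\operatorname*{Graph}T}(p)=-\sigma_{T-z}(p)\le M\,\|p\|.
\]
Next I would identify $W$ as the dual of $E:=X^{*}\times X$ under the natural pairing $\langle(x^{**},x^{*}),(y^{*},y)\rangle:=\langle y^{*},x^{**}\rangle+\langle y,x^{*}\rangle$; the $w^{*}$-topology on $W$ is then $w(W,E)$, and the norm of any $q\in E$ as an element of $W^{*}$ coincides with its norm in $E$. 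Embedding $\operatorname*{Graph}T$ into $W$ via $(a,a^{*})\mapsto(Ja,a^{*})$ and sending $p=(x',x'^{*})\mapsto p^{T}:=(x'^{*},x')\in E$, a routine check yields $p\cdot\alpha=\langle\alpha,p^{T}\rangle$ for $\alpha\in\operatorname*{Graph}T$ (and the analogous identity with the embedded $z$), together with $\|p\|_{Z}=\|p^{T}\|_{E}$. Since $p^{T}$ is $w^{*}$-continuous on $W$, $\sigma_{C}(p^{T})=\sigma_{\operatorname*{Graph}T}(p)$, and the previous inequality becomes
\[
\langle z,p^{T}\rangle-\sigma_{C}(p^{T})\le M\,\|p^{T}\|_{E},\qquad p^{T}\in E.
\]

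The core step is the distance upgrade $\operatorname{dist}_{W}(z,C)\le M$. I would argue by contraposition: if $\operatorname{dist}_{W}(z,C)>M$, then $z\notin C+B_{M}$ with $B_{M}:=\{u\in W\mid\|u\|_{W}\le M\}$. But $C$ is $w^{*}$-closed by construction and $B_{M}$ is $w^{*}$-compact by Banach-Alaoglu (the norm on $W$ is $w^{*}$-lsc and $B_{M}$ sits in the product of two $w^{*}$-compact coordinate balls), hence $C+B_{M}$ is $w^{*}$-closed convex. Hahn-Banach separation in the LCS $(W,w^{*})$ then produces a nonzero $q\in E$ with $\langle z,q\rangle>\sigma_{C+B_{M}}(q)=\sigma_{C}(q)+M\|q\|_{E}$, contradicting the displayed inequality. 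Squaring $\operatorname{dist}_{W}(z,C)\le M$ gives (\ref{m5}).

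Finally, for (\ref{m7}), reflexivity of $X$ yields $X^{**}=X$, so $W=X\times X^{*}$ as a normed space, and $w^{*}$ on $W$ coincides with the weak topology of $X\times X^{*}$ (since weak-star and weak topologies agree on each reflexive factor). Mazur's theorem then identifies $C$ with the norm-closure of $\operatorname*{conv}\operatorname*{Graph}T$, and the norm distance from $z$ to a set equals the distance to its norm-closure, so (\ref{m5}) specializes to (\ref{m7}). The principal obstacle is the separation step: it is critical that both $C$ and the ball $B_{M}$ are $w^{*}$-closed/compact and that the separating functional lives in the predual $E$ with matching norm, so that the bound produced on the predual actually translates into a norm distance estimate in $W$.
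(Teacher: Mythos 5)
Your proof is correct, and it differs from the paper's only in the second half, where the support--function inequality is converted into a distance bound. The first half is identical: both arguments start from (\ref{m4}), use $\sqrt{|c(p)|}\le\|p\|/\sqrt{2}$ to get $\sigma_{T-z}(p)+\sqrt{2\alpha}\,\|p\|\ge0$ for all $p\in Z$ with $\alpha:=(\varphi_{T}-c)(z)$, and must then show this forces $z$ to lie within $\sqrt{2\alpha}$ of $\operatorname{cl}_{(X^{**}\times X^{*},w^{*})}\operatorname{conv}\operatorname{Graph}T$. The paper does this in one line of subdifferential calculus: $0$ minimizes $\sigma_{T-z}+\sqrt{2\alpha}\|\cdot\|$, hence $0\in\partial\sigma_{T-z}(0)+\sqrt{2\alpha}B^{*}$ by the exact sum rule \cite[Theorem~2.8.7(iii)]{MR1921556} (applicable since the norm is continuous), and $\partial\sigma_{T-z}(0)=\operatorname{cl}_{(Z^{*},w^{*})}\operatorname{conv}\operatorname{Graph}T^{-1}-z^{T}$. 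You re-derive exactly this consequence by hand: Alaoglu gives $w^{*}$-compactness of the ball $B_{M}$, closed-plus-compact gives $w^{*}$-closedness of $C+B_{M}$, and Hahn--Banach separation in $(X^{**}\times X^{*},w^{*})$ turns $z\notin C+B_{M}$ into a violation of the support inequality. The two routes are equivalent convex-duality arguments; yours is more self-contained (no appeal to the Moreau--Rockafellar sum rule) at the cost of some point-set checking that the paper outsources to the cited theorem, and you make explicit the predual norm identification $\|q\|_{W^{*}}=\|q\|_{X^{*}\times X}$ that the paper leaves implicit in $\|q\|_{Z^{*}}=\|q^{T}\|_{X^{**}\times X^{*}}$. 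Your handling of the reflexive case via Mazur's theorem is also correct; the paper does not spell that step out. One trivial omission: begin by discarding $z\notin\operatorname{dom}\varphi_{T}$, where (\ref{m5}) is vacuous and $M$ is not finite.
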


\begin{proof} Note that for every $p\in Z$, $\sqrt{|c(p)|}\le(\sqrt{2})^{-1}\|p\|$
so, for $\alpha:=(\varphi_{T}-c)(z)$, (\ref{m4}) reads
\[
\forall p\in Z,\ \sigma_{T-z}(p)+\sqrt{2\alpha}\cdot\|p\|\ge0.
\]
As previously seen, $0\in\arg\min(\sigma_{T-z}+\sqrt{2\alpha}\|\cdot\|)$,
$0\in\partial(\sigma_{T-z}+\sqrt{2\alpha}\|\cdot\|)(0)=\partial\sigma_{T-z}(0)+\sqrt{2\alpha}B^{*}$
(see e.g \cite[Theorem\ 2.8.7\ (iii),\ p.\ 126]{MR1921556}), where
$B^{*}$ is the unit ball in $Z^{*}$ and ``$\partial$'' is the
subdifferential considered with respect to the dual system $(Z,Z^{*})$. 

Hence there is $q\in\sqrt{2\alpha}B^{*}$, $-q\in\partial\sigma_{T-z}(0)$,
that is, $-q\in\operatorname*{cl}_{(Z^{*},w^{*})}\operatorname*{conv}\operatorname*{Graph}T^{-1}-z^{T}$. 

We have $z-q^{T}\in\operatorname*{cl}_{(X^{**}\times X^{*},w^{*})}\operatorname*{conv}\operatorname*{Graph}T$,
$\|q\|_{Z^{*}}=\|q^{T}\|_{X^{**}\times X^{*}}\le\sqrt{2\alpha}$,
\[
{\rm dist}(z,\operatorname*{cl}\!\,_{(X^{**}\times X^{*},w^{*})}\operatorname*{conv}\operatorname*{Graph}T)\le\|z-(z-q^{T})\|_{X^{**}\times X^{*}}=\|q\|_{Z^{*}}\le\sqrt{2\alpha}.
\]
\end{proof}

\begin{remark} The inequalities in Proposition \ref{est-norm} give
rough estimates of how far $\varphi_{T}$ is from $c$, since there
are maximal monotone operators that cover the whole space $Z$ with
their convex hull such as $f(x)=x^{3}$, $x\in\mathbb{R}$; $\operatorname*{conv}(\operatorname*{Graph}f)=\mathbb{R}^{2}$.
However, such operators $T:X\rightrightarrows X^{*}$, defined in
a Banach space $X$, that have $\operatorname*{conv}(\operatorname*{Graph}T)=X\times X^{*}$
also satisfy $\operatorname*{conv}D(T)=X$, in which case $D(T)=X$
(see \cite[Theorem\ 1,\ p.\ 398]{MR0253014}) and they are well behaved
and understood (see f.i. \cite{MR3252437,MR3492118,MR2577332}). \end{remark}

\begin{remark} If we change the norms on $Z$ and $Z^{*}$ to a weighted
norms of the form $\|(x,x^{*})\|_{\delta}=\sqrt{\delta||x\|^{2}+\frac{1}{\delta}||x^{*}\|^{2}}$,
$x\in X$, $x^{*}\in X^{*}$, where $\delta>0$, we get that, for
every $z\in\operatorname*{dom}\varphi_{T}$, $\delta>0$, $\exists(x_{\delta}^{**},x_{\delta}^{*})\in\operatorname*{cl}\!\,_{(X^{**}\times X^{*},w^{*})}\operatorname*{conv}\operatorname*{Graph}T)$
such that $\|z-(x_{\delta}^{**},x_{\delta}^{*})\|_{\delta}\le(\varphi_{T}-c)(z)$.
For $\delta\to0$, $x_{\delta}^{**}\to x$ strongly in $X^{**}$ and
so we recover again (\ref{m8}). \end{remark}

\section{Estimates for monotone operators}

Recall that $T$ is monotone iff $\operatorname*{Graph}T\subset[\varphi_{T}\le c]$
while $T$ is maximal monotone iff $\operatorname*{Graph}T=[\varphi_{T}\le c]$.
Also, recall the identity
\begin{equation}
\forall z,w\in Z,\ 0\le t\le1,\ c(tz+(1-t)w)=tc(w)+(1-t)c(w)-t(1-t)c(z-w).\label{i1}
\end{equation}
 Due to the convexity of $\varphi_{T}$, from (\ref{i1}) we have
\begin{equation}
\forall z,w\in Z,\ 0\le t\le1,\ (\varphi_{T}-c)(tz+(1-t)w)\le t(\varphi_{T}-c)(z)+(1-t)(\varphi_{T}-c)(w)+t(1-t)c(z-w).\label{i2}
\end{equation}

Recall also that $T^{+}:X\rightrightarrows X^{*}$ is the operator
defined by $\operatorname*{Graph}(T^{+}):=[\varphi_{T}\le c]$ or,
equivalently, that $z\in\operatorname*{Graph}(T^{+})$ iff $z$ is
monotonically related to $T$; and that for $T:X\rightrightarrows X^{*}$
monotone, $T\subset T^{++}\subset T^{+}$, where $T^{++}:=(T^{+})^{+}$.
For more properties of the application $T\to T^{+}$ see \cite{MR2128696,MR1381386,MR2594359}. 

\medskip

The next result first two subpoints can be seen as an extension of
the known fact that, for a maximal monotone operator $T:X\rightrightarrows X^{*}$,
for every $z\in[\varphi_{T}>c]$ there is $w\in T$ such that $c(z-w)<0$;
while the third subpoint explores the structure of the set $[\varphi_{T}<c]$. 

\begin{proposition} Let $X$ be a LCS and let $T:X\rightrightarrows X^{*}$
be monotone. Then

\medskip{}

\emph{(i)} for every $z\in[\varphi_{T}>c]$ there is $w\in[\varphi_{T}\le c]$
such that $c(z-w)<0$;

\medskip{}

\emph{(ii)} for every $z\in[\varphi_{T}>c]\cap\operatorname*{dom}\varphi_{T}$
there is $w\in[\varphi_{T}=c]$ such that $c(z-w)<0$;

\medskip

\emph{(iii)} for every $z\in[\varphi_{T}<c]$ and for every $t\in(0,1)$
there is $w\in T$ such that $tz+(1-t)w\in[\varphi_{T}<c]$. \end{proposition}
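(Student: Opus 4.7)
My plan is to handle (i) and (ii) together by unwrapping the definition of $\varphi_{T}$ with the algebraic identity $c(z-w)=c(z)+c(w)-z\cdot w$, while (iii) will combine the convexity estimate~(\ref{i2}) with the infimum formula~(\ref{eq:-5}).

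For (i), the strict inequality $\varphi_{T}(z)>c(z)$ unwraps to $\sup_{w\in\operatorname*{Graph}T}[z\cdot w-c(w)]>c(z)$, so I can select $w\in\operatorname*{Graph}T$ with $z\cdot w-c(w)>c(z)$; the identity above then rewrites this as $c(z-w)<0$. Since $T$ is monotone, $\operatorname*{Graph}T\subset[\varphi_{T}\le c]$, so this $w$ already witnesses (i). For (ii), I plan to strengthen the observation by noting that monotone $T$ in fact satisfies $\operatorname*{Graph}T\subset[\varphi_{T}=c]$: the bound $\varphi_{T}(w)\le c(w)$ for $w\in T$ is exactly monotonicity, while $\varphi_{T}(w)\ge w\cdot w-c(w)=c(w)$ follows by testing the defining supremum at $w$ itself. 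Hence the witness $w\in T$ produced in (i) already lies in $[\varphi_{T}=c]$, and the hypothesis $z\in\operatorname*{dom}\varphi_{T}$ is not actually needed (but keeping it is harmless).

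For (iii), set $\beta:=(c-\varphi_{T})(z)>0$. Applying~(\ref{i2}) to $z$ and an arbitrary $w\in T$, and using $(\varphi_{T}-c)(w)=0$ from the observation above, I obtain
\begin{equation*}
(\varphi_{T}-c)(tz+(1-t)w)\le-t\beta+t(1-t)\,c(z-w).
\end{equation*}
The right-hand side is strictly negative precisely when $c(z-w)<\beta/(1-t)$. Identity~(\ref{eq:-5}) identifies $\inf_{w\in\operatorname*{Graph}T}c(z-w)$ with $(c-\varphi_{T})(z)=\beta$, and since $\beta/(1-t)>\beta$, some $w\in T$ satisfies the required bound.

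The only real subtlety is the possible non-attainment of the infimum in~(\ref{eq:-5}): it is handled by picking $w\in T$ with $c(z-w)<\beta+\varepsilon$ for any slack $\varepsilon\in(0,\,\beta t/(1-t))$, which keeps $c(z-w)$ strictly below the threshold $\beta/(1-t)$ and yields $tz+(1-t)w\in[\varphi_{T}<c]$. I do not anticipate any further obstacles; parts (i) and (ii) are essentially definition-chasing once one records $\operatorname*{Graph}T\subset[\varphi_{T}=c]$, and (iii) is a one-line optimization after~(\ref{i2}) and~(\ref{eq:-5}) are combined.
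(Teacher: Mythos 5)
Your proof is correct, and for parts (i) and (ii) it takes a genuinely different and more economical route than the paper. For (i) the paper argues abstractly: monotonicity gives $T\subset T^{+}:=[\varphi_{T}\le c]$, hence $T^{++}\subset T^{+}$ by antitonicity of $S\mapsto S^{+}$, and (i) is just the restatement $Z\setminus T^{+}\subset Z\setminus T^{++}$; this produces a witness $w\in T^{+}$ but not necessarily in $\operatorname*{Graph}T$. Consequently, for (ii) the paper must upgrade $w$ from $[\varphi_{T}\le c]$ to $[\varphi_{T}=c]$ by an intermediate-value argument along the segment $[z,u]$, and that is precisely where the hypothesis $z\in\operatorname*{dom}\varphi_{T}$ is used (finiteness of the convex lsc function $\varphi_{T}-c$ on the segment gives continuity). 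You instead read the strict inequality $\varphi_{T}(z)>c(z)$ off the defining supremum to extract $w\in\operatorname*{Graph}T$ with $c(z-w)<0$, and since the standard Fitzpatrick identity gives $\operatorname*{Graph}T\subset[\varphi_{T}=c]$ for any nonempty monotone $T$ (no NI assumption needed, exactly as you argue), both (i) and (ii) follow at once; your observation that the hypothesis $z\in\operatorname*{dom}\varphi_{T}$ in (ii) is then superfluous is accurate. What your shortcut does not capture is the geometric content of the paper's segment argument (that the segment from $z$ to any $u\in T^{+}$ with $c(z-u)<0$ meets $[\varphi_{T}=c]$), but that is not required by the statement. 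For (iii) your argument is essentially the paper's: both combine (\ref{i2}) (the paper's specialization (\ref{i3})) with the infimum identity (\ref{eq:-5}); the paper phrases it as a proof by contradiction after passing to $\inf_{w\in T}$, while you choose the near-minimizer explicitly with an $\varepsilon<\beta t/(1-t)$, which is the same computation run forwards. Both your proof and the paper's tacitly assume $\operatorname*{Graph}T\neq\emptyset$ in (iii) (otherwise $[\varphi_{T}<c]=Z$ and no $w$ exists), so this is not a gap relative to the paper.
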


\centerline{\begin{pspicture}(0,-2.96)(6.54,2.96)
\psline[linewidth=0.03cm](2.0,-0.94)(4.56,0.98)
\rput(4.54,0.96){\psaxes[linewidth=0.03,arrowsize=0.05291667cm 2.0,arrowlength=1.4,arrowinset=0.4,labels=none,ticks=none,ticksize=0.10583333cm]{<->}(0,0)(0,0)(2,2)} \rput(2.0,-0.96){\psaxes[linewidth=0.03,arrowsize=0.05291667cm 2.0,arrowlength=1.4,arrowinset=0.4,labels=none,ticks=none,ticksize=0.10583333cm]{<->}(0,0)(-2,-2)(0,0)}
\rput(4.121406,-0.255){$T$}
\rput(6.001406,2.065){$T^+$}
\rput(0.6614063,-2.155){$T^+$}
\rput(1,-0.675){$[\varphi_T=c]$}
\rput(1,1.5){$[\varphi_T>c]$}
\psline[linewidth=0.03cm,arrowsize=0.05291667cm 2.0,arrowlength=1.4,arrowinset=0.4]{->}(1.74,2.42)(4.52,1.48)
\rput(1.7214062,2.6){$z$} \usefont{T1}{ptm}{m}{n} 
\rput(4.8,1.505){$w$} 
\end{pspicture} }

\begin{proof} Since $T\in\mathcal{M}(X)$ we know that $T\subset T^{+}:=[\varphi_{T}\le c]$
so $T^{+}\supset T^{++}:=(T^{+})^{+}$ . Subpoint (i) states that
$Z\setminus T^{+}\subset Z\setminus T^{++}$. 

(ii) If $z\in[\varphi_{T}>c]\cap\operatorname*{dom}\varphi_{T}$ then,
according to (i), there is $u\in T^{+}$ such that $c(z-u)<0$. The
function $f(t):=(\varphi_{T}-c)(z+t(u-z))$, $t\in[0,1]$, is continuous
on $[0,1]$, $f(0)>0$, $f(1)\le0$. Hence there is $0<s\le1$, $s\in[f=0]$,
i.e., $w:=z+s(u-z)\in[\varphi_{T}=c]$ and $c(z-w)=s^{2}c(z-u)<0$.

For (iii) first note that for $w\in T\subset[\varphi_{T}\le c]$ (\ref{i2})
implies
\begin{equation}
\forall z\in Z,\ 0\le t\le1,\ (\varphi_{T}-c)(tz+(1-t)w)\le t[(\varphi_{T}-c)(z)+(1-t)c(z-w)].\label{i3}
\end{equation}
Assume by contradiction that there are $z\in[\varphi_{T}<c]$ and
$t\in(0,1)$ such that for every $w\in T$, $(\varphi_{T}-c)(tz+(1-t)w)\ge0$.
From (\ref{i3}) we get $(\varphi_{T}-c)(z)+(1-t)c(z-w)\ge0$. Pass
to $\inf_{w\in T}$, taking into account that $\inf_{w\in T}c(z-w)=(c-\varphi_{T})(z)\in\mathbb{R}$,
to get the contradiction $t(\varphi_{T}-c)(z)\ge0$. \end{proof}

\begin{remark} In contrapositive form, subpoint (iii) of the previous
proposition spells: -if there exists $t\in(0,1)$ such that,  for
every $w\in T$, $tz+(1-t)w\in[\varphi_{T}\ge c]$ then $z\in[\varphi_{T}\ge c]$.
\end{remark}

Since every maximal monotone operator is NI (see \cite[Theorems\ 3.4,\ 3.8]{MR1009594})
we know from (\ref{m8}) that in a LCS $(X,\tau)$
\begin{equation}
T\in\mathfrak{M}(X)\Longrightarrow\operatorname*{Pr}\!\,_{X}\operatorname*{dom}\varphi_{T}\subset\operatorname*{cl}\!\,_{\tau}\operatorname*{conv}D(T).\label{i4}
\end{equation}
We prove (\ref{i4}) using a minmax technique. 

\strut

\begin{proof}[Proof of (21)] Let $x\in\operatorname*{Pr}{}_{X}\operatorname*{dom}\varphi_{T}$,
that is, $(x,x^{*})\in\operatorname*{dom}\varphi_{T}$, for some $x^{*}\in X^{*}$. 

Assume that $x\not\in D(T)$. Since $T$ is maximal monotone, for
every $u^{*}\in X^{*}$, 
\[
\varphi_{T}(x,x^{*}+u^{*})>\langle x,x^{*}+u^{*}\rangle.
\]
Hence there exists $(\bar{a},\bar{a}^{*})\in T$ (which depends on
$u^{*}$) such that
\[
\langle x-\bar{a},\bar{a}^{*}\rangle+\langle\bar{a},x^{*}+u^{*}\rangle>\langle x,x^{*}+u^{*}\rangle\Rightarrow
\]
\begin{equation}
\begin{aligned}\varphi_{T}(x,x^{*}) & \ge\langle x-\bar{a},\bar{a}^{*}\rangle+\langle\bar{a},x^{*}\rangle\\
 & >\langle x,x^{*}\rangle+\langle x-\bar{a},u^{*}\rangle\\
 & \ge\langle x,x^{*}\rangle+\inf_{a\in D(T)}\langle x-a,u^{*}\rangle\\
 & =\langle x,x^{*}\rangle+\inf_{a\in\operatorname*{conv}D(T)}\langle x-a,u^{*}\rangle.
\end{aligned}
\label{i5}
\end{equation}
Let $V$ be a generic closed convex $\tau-$neighborhood of $0\in X$.
According to Bourbaki's Theorem, its polar $V^{\circ}$ is convex
weak-star compact. Pass to supremum over $u^{*}\in V^{\circ}$ in
(\ref{i5}) to find 
\[
\sup_{u^{*}\in V^{\circ}}\inf_{a\in\operatorname*{conv}D(T)}\langle x-a,u^{*}\rangle\le(\varphi_{T}-c)(x,x^{*}).
\]

Consider $f:V^{\circ}\times\operatorname*{conv}D(T)\to\mathbb{R}$,
$f(u^{*},a):=\langle x-a,u^{*}\rangle$. Then, for every $a\in\operatorname*{conv}D(T)$,
$f(\cdot,a)$ is linear and weak-star continuous while, for every
$u^{*}\in V^{\circ}$, $f(u^{*},\cdot)$ is affine and $\tau-$continuous.
We employ the minmax theorem (see e.g. \cite[Theorem\ 2.10.2,\ p.\  144]{MR1921556})
to find that, for every $V$ a closed convex $\tau-$neighborhood
of $0\in X$,
\[
\sup_{u^{*}\in V^{\circ}}\inf_{a\in\operatorname*{conv}D(T)}\langle x-a,u^{*}\rangle=\inf_{a\in\operatorname*{conv}D(T)}\sup_{u^{*}\in V^{\circ}}\langle x-a,u^{*}\rangle\le(\varphi_{T}-c)(x,x^{*})<+\infty.
\]
Change $V$ with $kV$ for $k>0$ and let $k\to0$ to get that, $\inf_{a\in\operatorname*{conv}D(T)}\sup_{u^{*}\in V^{\circ}}\langle x-a,u^{*}\rangle=0$.
In particular $\inf_{a\in\operatorname*{conv}D(T)}\sup_{u^{*}\in V^{\circ}}\langle x-a,u^{*}\rangle<1$
so there is $a\in\operatorname*{conv}D(T)$ such that $\sup_{u^{*}\in V^{\circ}}\langle x-a,u^{*}\rangle<1$,
i.e., since $V^{\circ}$ is symmetric, $x-a\in V^{\circ\circ}=V$.
Hence, for every $\tau-$closed convex neighborhood $V$ of $0\in X$,
$x\in\operatorname*{conv}D(T)+V$; that is, $x\in\operatorname*{cl}\!\,_{\tau}\operatorname*{conv}D(T)$.
\end{proof}

\begin{lemma} Let $X$ be a LCS and let $T:X\rightrightarrows X^{*}$.
Then $T$ is monotone iff $T^{+}$ is NI. \end{lemma}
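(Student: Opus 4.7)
My plan is to handle the two implications separately; the forward one is a one-line case check while the reverse one should route through the preliminary fact that contact sets $[f=c]$ of functions $f\ge c$ are automatically monotone.

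For the forward direction, suppose $T$ is monotone, equivalently $T\subset T^{+}$. To verify $\varphi_{T^{+}}\ge c$ pointwise, fix $z\in Z$. If $z\in T^{+}$, the single candidate $\alpha=z$ in the defining supremum already yields $z\cdot z-c(z)=c(z)$, so $\varphi_{T^{+}}(z)\ge c(z)$. If $z\notin T^{+}=[\varphi_{T}\le c]$, then $\varphi_{T}(z)>c(z)$, so there exists $\alpha\in T$ with $z\cdot\alpha-c(\alpha)>c(z)$; the inclusion $T\subset T^{+}$ then places $\alpha$ in $T^{+}$ as well, so $\varphi_{T^{+}}(z)\ge z\cdot\alpha-c(\alpha)>c(z)$.

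For the reverse direction, assume $T^{+}$ is NI. The plan is to show $T\subset[\varphi_{T^{+}}=c]$ and then invoke the preliminary lemma from Section~1 that $[f=c]\in\mathcal{M}(X)$ for every $f\in\Lambda(X)$ with $f\ge c$. Fix $z_{0}\in T$. For every $\alpha\in T^{+}$ the definition of $T^{+}$ gives $c(\alpha-z_{0})\ge0$, and since $c$ is invariant under the sign change $u\mapsto -u$ we also have $c(z_{0}-\alpha)\ge0$. Applying identity (\ref{eq:-5}) with $T^{+}$ in place of $T$,
\[
(\varphi_{T^{+}}-c)(z_{0})=-\inf_{\alpha\in T^{+}}c(z_{0}-\alpha)\le0,
\]
and the NI hypothesis supplies the matching lower bound, so $\varphi_{T^{+}}(z_{0})=c(z_{0})$. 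To cite the preliminary lemma I only need $\varphi_{T^{+}}\in\Lambda(Z)$: convexity is automatic, $\varphi_{T^{+}}\ge c$ forces $T^{+}\ne\emptyset$ (otherwise $\varphi_{T^{+}}\equiv-\infty$) so $\varphi_{T^{+}}>-\infty$, and the identity just proved gives $\varphi_{T^{+}}(z_{0})=c(z_{0})\in\mathbb{R}$ (the case $T=\emptyset$ is vacuously monotone). Thus $T\subset[\varphi_{T^{+}}=c]\in\mathcal{M}(X)$, and a subset of a monotone set is monotone.

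The main obstacle is the backward direction; a naive attempt to extract $\varphi_{T}\le c$ on $T$ directly from $\varphi_{T^{+}}\ge c$ gets tangled in the cross term of $c(u+v)=c(u)+c(v)+u\cdot v$, because the approximate minimizers of $c(z_{0}-\alpha)$ over $\alpha\in T^{+}$ cannot be compared to an arbitrary $z'\in T$ via triangle-style manipulations. The trick is to sidestep $\varphi_{T}$ altogether and repackage the argument through the contact set $[\varphi_{T^{+}}=c]$, whose monotonicity is automatic from the preliminary lemma.
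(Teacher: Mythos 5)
Your proof is correct, and the two implications fare differently against the paper. The backward direction is essentially the paper's own argument: both of you reduce to the inclusion $T\subset T^{++}=[\varphi_{T^{+}}\le c]$ (you derive it from the identity (\ref{eq:-5}) plus $c(-u)=c(u)$; the paper simply asserts it) and then conclude via monotonicity of the contact set $[\varphi_{T^{+}}=c]$ — you are in fact more careful than the paper in verifying that $\varphi_{T^{+}}$ is proper before invoking that preliminary fact. The forward direction, however, takes a genuinely different route. The paper picks a maximal monotone extension $M$ of $T$ (a Zorn's lemma construction), notes $M\subset T^{+}$ so that $\varphi_{T^{+}}\ge\varphi_{M}$, and then quotes Fitzpatrick's theorem that maximal monotone operators are NI. Your two-case check — $\varphi_{T^{+}}(z)\ge z\cdot z-c(z)=c(z)$ when $z\in T^{+}$, and $\varphi_{T^{+}}(z)\ge\varphi_{T}(z)>c(z)$ when $z\notin T^{+}=[\varphi_{T}\le c]$, using only the inclusion $T\subset T^{+}$ that monotonicity provides — is entirely self-contained and elementary. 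What the paper's route buys is brevity and a conceptual picture ($T^{+}$ absorbs every maximal monotone extension of $T$); what yours buys is independence from both the existence of maximal monotone extensions and the nontrivial cited fact that maximal monotone operators are NI.
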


\begin{proof} Assume that $T$ is monotone and let $M$ be a maximal
monotone extension of $T$. Then $M\subset T^{+}$, $\varphi_{T^{+}}\ge\varphi_{M}\ge c$. 

Conversely, if $\varphi_{T^{+}}\ge c$ then $T\subset[\varphi_{T^{+}}\le c]=[\varphi_{T^{+}}=c]\in\mathcal{M}(X)$.
\end{proof}

\begin{proposition} Let $(X,\tau)$ be a LCS and let $T:X\rightrightarrows X^{*}$
be monotone. Then 
\begin{equation}
\operatorname*{Pr}\!\,_{X}\operatorname*{dom}\varphi_{T}\subset D(T^{+})\cup\operatorname*{cl}\!\,_{\tau}\operatorname*{Pr}\!\,_{X}\operatorname*{dom}\varphi_{T^{+}}\subset\operatorname*{cl}\!\,_{\tau}\operatorname*{conv}D(T^{+})\subset\operatorname*{cl}\!\,_{\tau}\operatorname*{Pr}\!\,_{X}\operatorname*{dom}\varphi_{T},\label{eq:-3}
\end{equation}
\begin{equation}
\operatorname*{cl}\!\,_{\tau}\operatorname*{conv}D(T^{+})=\operatorname*{cl}\!\,_{\tau}\operatorname*{Pr}\!\,_{X}\operatorname*{dom}\varphi_{T}.\label{eq:-4}
\end{equation}
\end{proposition}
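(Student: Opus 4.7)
The plan is to verify the three inclusions in (\ref{eq:-3}) one at a time and then deduce (\ref{eq:-4}) by taking closures. Two preliminary observations will be used throughout. First, since $T$ is monotone, the preceding lemma yields that $T^{+}$ is NI, and the Corollary of Section 2 applied to $T^{+}$ gives $\operatorname*{Pr}_{X}\operatorname*{dom}\varphi_{T^{+}}\subset\operatorname*{cl}_{\tau}\operatorname*{conv}D(T^{+})$. Second, on $\operatorname*{Graph}T^{+}=[\varphi_{T}\le c]$ one has $\varphi_{T}\le c=\varphi_{T^{+}}<\infty$ (the middle equality using that $T^{+}$ is NI), so $\operatorname*{Graph}T^{+}\subset\operatorname*{dom}\varphi_{T}\cap\operatorname*{dom}\varphi_{T^{+}}$ and hence $D(T^{+})\subset\operatorname*{Pr}_{X}\operatorname*{dom}\varphi_{T}\cap\operatorname*{Pr}_{X}\operatorname*{dom}\varphi_{T^{+}}$.

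The second and third inclusions of (\ref{eq:-3}) then follow at once. For the second, combine the preliminary corollary estimate on $\operatorname*{Pr}_{X}\operatorname*{dom}\varphi_{T^{+}}$ with the trivial $D(T^{+})\subset\operatorname*{cl}_{\tau}\operatorname*{conv}D(T^{+})$. For the third, observe that $\operatorname*{Pr}_{X}\operatorname*{dom}\varphi_{T}$ is convex (as the projection of the convex set $\operatorname*{dom}\varphi_{T}$) and contains $D(T^{+})$, so it contains $\operatorname*{conv}D(T^{+})$ and its $\tau$-closure contains $\operatorname*{cl}_{\tau}\operatorname*{conv}D(T^{+})$.

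The first inclusion is the main step and the expected obstacle; I would prove it by adapting the minmax argument of the proof of (\ref{i4}). Fix $x\in\operatorname*{Pr}_{X}\operatorname*{dom}\varphi_{T}$ with $x\notin D(T^{+})$ and pick $x^{*}\in X^{*}$ with $(x,x^{*})\in\operatorname*{dom}\varphi_{T}$. For every $u^{*}\in X^{*}$, the hypothesis $x\notin D(T^{+})$ forces $(x,x^{*}+u^{*})\notin T^{+}=[\varphi_{T}\le c]$, i.e.\ $\varphi_{T}(x,x^{*}+u^{*})>\langle x,x^{*}+u^{*}\rangle$. This is exactly the strict inequality that drives the proof of (\ref{i4}): extracting $(\bar{a},\bar{a}^{*})\in T$ depending on $u^{*}$ one reaches $(\varphi_{T}-c)(x,x^{*})\ge\inf_{a\in\operatorname*{conv}D(T)}\langle x-a,u^{*}\rangle$. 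The remainder of the proof of (\ref{i4}) --- a closed convex $\tau$-neighborhood $V$ of $0\in X$, the $w^{*}$-compactness of $V^{\circ}$ via Bourbaki's theorem, the minmax theorem applied on $V^{\circ}\times\operatorname*{conv}D(T)$, and the rescaling $V\leadsto kV$ with $k\to 0^{+}$ --- then yields $x\in\operatorname*{cl}_{\tau}\operatorname*{conv}D(T)$. Since $D(T)\subset D(T^{+})\subset\operatorname*{Pr}_{X}\operatorname*{dom}\varphi_{T^{+}}$ and the latter set is convex, we conclude $x\in\operatorname*{cl}_{\tau}\operatorname*{Pr}_{X}\operatorname*{dom}\varphi_{T^{+}}$, as required.

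Finally, (\ref{eq:-4}) is immediate from (\ref{eq:-3}): taking the $\tau$-closure in the first two inclusions yields $\operatorname*{cl}_{\tau}\operatorname*{Pr}_{X}\operatorname*{dom}\varphi_{T}\subset\operatorname*{cl}_{\tau}\operatorname*{conv}D(T^{+})$, and the third inclusion of (\ref{eq:-3}) is the reverse containment.
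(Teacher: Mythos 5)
Your decomposition into the three inclusions, and the derivation of (\ref{eq:-4}) from (\ref{eq:-3}) by taking closures, match the paper, and your treatment of the second and third inclusions is correct. The genuine problem is your second ``preliminary observation'': the claim that $c=\varphi_{T^{+}}$ on $\operatorname*{Graph}T^{+}$, hence that $\operatorname*{Graph}T^{+}\subset\operatorname*{dom}\varphi_{T^{+}}$ and $D(T^{+})\subset\operatorname*{Pr}_{X}\operatorname*{dom}\varphi_{T^{+}}$, is false in general. The set $[\varphi_{T^{+}}\le c]=[\varphi_{T^{+}}=c]$ is $T^{++}$, not $T^{+}$, and only $T\subset T^{++}\subset T^{+}$ holds (as recalled just before the proposition); asserting $\varphi_{T^{+}}=c$ on $T^{+}$ amounts to the reverse inclusion $T^{+}\subset T^{++}$. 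Concretely, for $T=\{(0,0)\}$ in $\mathbb{R}\times\mathbb{R}$ one has $\operatorname*{Graph}T^{+}=\{(x,x^{*})\mid xx^{*}\ge0\}$ and $D(T^{+})=\mathbb{R}$, yet $\varphi_{T^{+}}(x,x^{*})=\sup\{xa^{*}+ax^{*}-aa^{*}\mid aa^{*}\ge0\}=+\infty$ unless $(x,x^{*})=(0,0)$, so $\operatorname*{Pr}_{X}\operatorname*{dom}\varphi_{T^{+}}=\{0\}\not\supset D(T^{+})$. The link $D(T^{+})\subset\operatorname*{Pr}_{X}\operatorname*{dom}\varphi_{T^{+}}$ in the chain you use at the end of your first inclusion is therefore broken. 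The repair is to bypass $D(T^{+})$ and argue $T\subset T^{++}=[\varphi_{T^{+}}\le c]\subset\operatorname*{dom}\varphi_{T^{+}}$, which gives $D(T)\subset\operatorname*{Pr}_{X}\operatorname*{dom}\varphi_{T^{+}}$ directly and then, by convexity of the projection, $\operatorname*{cl}_{\tau}\operatorname*{conv}D(T)\subset\operatorname*{cl}_{\tau}\operatorname*{Pr}_{X}\operatorname*{dom}\varphi_{T^{+}}$; this is exactly the paper's justification. (The other use of your observation, $D(T^{+})\subset\operatorname*{Pr}_{X}\operatorname*{dom}\varphi_{T}$ for the third inclusion, is sound: it needs only $\varphi_{T}\le c<\infty$ on $T^{+}$.)

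With that one link replaced your argument goes through, but note that the minmax detour for the first inclusion is unnecessary: the paper obtains it in one line from (\ref{m9}), since $\operatorname*{Pr}_{X}[\varphi_{T}\le c]=D(T^{+})$ and the $D(T)$-term of (\ref{m9}) is absorbed into $\operatorname*{cl}_{\tau}\operatorname*{Pr}_{X}\operatorname*{dom}\varphi_{T^{+}}$ as above. Your adaptation of the minmax proof of (\ref{i4}) is legitimate as far as it goes --- the hypothesis $x\notin D(T^{+})$ is precisely what supplies the strict inequality $\varphi_{T}(x,x^{*}+u^{*})>\langle x,x^{*}+u^{*}\rangle$ that drives that argument, so maximality of $T$ is not needed there --- but it only re-proves the special case of (\ref{m9}) that is already available.
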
 

\begin{proof} The first inclusion in (\ref{eq:-3}) follows from
(\ref{m9}) and $T\subset T^{++}=[\varphi_{T^{+}}\le c]\subset\operatorname*{dom}\varphi_{T^{+}}$.
The second inclusion in (\ref{eq:-3}) is a consequence of (\ref{m8})
applied for $T^{+}$ while the third inclusion in (\ref{eq:-3}) is
derived from $T^{+}\subset\operatorname*{dom}\varphi_{T}$.

Relation (\ref{eq:-4}) is a direct consequence of (\ref{eq:-3})
or of (\ref{m9}) and $T\subset T^{+}\subset\operatorname*{dom}\varphi_{T}$.
\end{proof}

\eject


\begin{thebibliography}{10}
\bibitem{MR1009594} Simon Fitzpatrick. \newblock Representing monotone
operators by convex functions. \newblock In {\em Workshop/{M}iniconference
on {F}unctional {A}nalysis and {O}ptimization ({C}anberra,
1988)}, volume~20 of {\em Proc. Centre Math. Anal. Austral. Nat.
Univ.}, pages 59\textendash 65. Austral. Nat. Univ., Canberra, 1988.

\bibitem{MR2128696} J.-E. Mart{í}nez-Legaz and B.~F. Svaiter.
\newblock Monotone operators representable by l.s.c.\ convex functions.
\newblock {\em Set-Valued Anal.}, 13(1):21\textendash 46, 2005.

\bibitem{MR2086060} Jean-Paul Penot. \newblock The relevance of
convex analysis for the study of monotonicity. \newblock {\em Nonlinear
Anal.}, 58(7-8):855\textendash 871, 2004.

\bibitem{MR0253014} R.~T. Rockafellar. \newblock Local boundedness
of nonlinear, monotone operators. \newblock {\em Michigan Math.
J.}, 16:397\textendash 407, 1969.

\bibitem{MR1381386} S.~Simons. \newblock The range of a monotone
operator. \newblock {\em J. Math. Anal. Appl.}, 199(1):176\textendash 201,
1996.

\bibitem{MR2207807} M.~D. Voisei. \newblock A maximality theorem
for the sum of maximal monotone operators in non-reflexive {B}anach
spaces. \newblock {\em Math. Sci. Res. J.}, 10(2):36\textendash 41,
2006.

\bibitem{MR2453098} M.~D. Voisei. \newblock The sum and chain rules
for maximal monotone operators. \newblock {\em Set-Valued Anal.},
16(4):461\textendash 476, 2008.

\bibitem{MR3252437} M.~D. Voisei. \newblock The {M}inimal {C}ontext
for {L}ocal {B}oundedness in {T}opological {V}ector {S}paces.
\newblock {\em An. \c{S}tiin\c{t}. Univ. Al. I. Cuza Ia\c{s}i.
Mat. (N.S.)}, 59(2):219\textendash 235, 2013.

\bibitem{MR3492118} M.~D. Voisei. \newblock The {L}ocal {E}quicontinuity
of a {M}aximal {M}onotone {O}perator. \newblock {\em Set-Valued
Var. Anal.}, 24(2):277\textendash 283, 2016.

\bibitem{MR2594359} M.~D. Voisei and C.~Z{\u{a}}linescu. \newblock
Linear monotone subspaces of locally convex spaces. \newblock {\em
Set-Valued Var. Anal.}, 18(1):29\textendash 55, 2010.

\bibitem{MR2577332} M.~D. Voisei and C.~Z{\u{a}}linescu. \newblock
Maximal monotonicity criteria for the composition and the sum under
weak interiority conditions. \newblock {\em Math. Program.}, 123(1,
Ser. B):265\textendash 283, 2010.

\bibitem{MR1921556} C.~Z{\u{a}}linescu. \newblock {\em Convex
analysis in general vector spaces}. \newblock World Scientific Publishing
Co. Inc., River Edge, NJ, 2002.
\end{thebibliography}
\end{document}